\theoremstyle{definition}
\newtheorem{definition}{Definition}[section]
\newtheorem{theorem}{Theorem}
\newtheorem{lemma}[theorem]{Lemma}
\newcommand{\padic}{$p$-adic }
\newcommand{\pq}{ \mathbb{Q}_p}
\begin{document}

\title{$p$-adic Ducci Sequences: a short note}

\author{Piero Giacomelli}
\email[]{pgiacomelli@fidiapharma.it}
\affiliation{Fidia Farmaceutici S.p.A.}

\date{\today}

\begin{abstract}
In this short note we formalized the definition for the Ducci operator $D$ in the context of the $p$-adic field $\mathbb{Q}_p$ as a natural extension of the classical one. Moreover we will describe the behavior of the operator and will provide some simple results as a counterpart to the classical one. 
\end{abstract}

\pacs{03.67.Pp, 03.67.Lx}

\maketitle

\section{Introduction}

Ducci sequences were first introduced in 1937\cite{Ciamberlini1937}. Their attractiveness is due to the easy definition and to the interesting properties.
In the last years they regain attention and different questions regarding this sequences and the behaviour of the Ducci operator associate with them rise attentions.
From the basic definitions some research have been done on finding some extension on general abelian groups , on reals and on cyclotomic fields \cite{breuer2010ducci}. Some study tried to extend the definition to an higher dimension case \cite{breuer2010ducci}. 
Some interesting connection between Ducci sequence and cellular automata\cite{mendivil2012dynamics}. Surprisingly, being that the Ducci operator that define the Ducci sequence is based on absolute value norm, there have been no study at all on the Ducci sequences defined by the Ducci operator using the \padic norm. In these paper we introduced the \padic Ducci operator and we define the \padic Ducci sequences on non-Archimedean valued field  $\pq$. Moreover we will describe the behavior of the \padic Ducci operator like in the case of the classical Ducci sequences. 
\section{Previous results}\label{sec:review}
Let $n \in \mathbb{N}$, we define the Ducci operator $D$ as the operator that maps $\mathbb{Z}^N$ into itself  as follows:
\begin{align*}
    D:\mathbb{Z}^n & \rightarrow \mathbb{Z}^n \\
    (a_1,a_2,\dots,a_n) & \mapsto  D(a_1,a_2,\dots,a_n) = \\
    & (|a_1-a_2|,|a_2-a_3|,\dots,|a_n-a_1|)
\end{align*}

Let $k \in \mathbb{N}$, we also define that $D^k$ is the Ducci operator applied $k$ times. 
This brings to the following definition for the interation of the Ducci operator
The Ducci sequences are defined as the recurrent sequences with seed $\alpha=(a_1,a_2,\dots,a_n)$ and the following terms calculated by applying the Ducci operator $k$ times.
\begin{equation*}
\alpha_{(k)} =
\left\{
	\begin{array}{ll}
		\alpha_{(0)} = \alpha  & \mbox{if } k = 0 \\
		\alpha_{(k)} = D^k(\alpha) & \mbox{if } k > 0
	\end{array}
\right.
\end{equation*}
The previous literature focused on describing the behaviour of the Ducci sequences as $k \mapsto \infty$.
It has been proved that the Ducci sequences are ultimely periodic, so that there exists a number 
$m \in \mathbb{N}$ such that $D^k(\alpha) = D^{k+m}(\alpha)$. The number $m$ is called the lenght of the cycle.

Moreover it has been proved that if $n$ is a power of $2$ then there exists a value $K$ such that $D^k(\alpha)=0$ for every $k \geq K$

In these note we are interested on find if the same results holds one we redefine the Ducci operator in the context of the non-archimedian setting using the $p$-adic valuation in the definition of the Ducci operator.
\section{$p$-adic Ducci operator and sequences}
Let $p$ be a prime we are now ready to first define the $p$-adic Ducci operator.
The definition of the $p$-adic ducci Operator slitlghy differ from the absolute value because the result of the $p$-adic evaluation are always a integer power of $p$.
Let us starting by defining the $p$-adic Ducci operator $D_p$.

\begin{definition}
Let $P = \{0\} \cup \{p^i: i \in \mathbb{Z} \}$.  We can define the $p$-adic Ducci operator the following map.

\begin{align*}
    D_p:\mathbb{Q_p}^n & \rightarrow P^n \subseteq \mathbb{Q_p}^n \\
    (a_1,a_2,\dots,a_n) & \mapsto  D_p(a_1,a_2,\dots,a_n) = \\
    & (|a_1-a_2|_p,|a_2-a_3|_p,\dots,|a_n-a_1|_p).
\end{align*}
Where if $x \in \mathbb{Q}_p$ then $|x|_p = \frac{1}{p^{ord_p(x)}}$ being $ord_p(x) = max\{m: p^m | x\}$ (i.e. $ord_p(x)$ is the maximum power of $p$ that divide $x$).    

\end{definition}
From this definition if follows naturally the following one:

\begin{definition}
The $p$-adic Ducci sequences are the ones generated by $\alpha$ and applying the $p$-adic Ducci operator $k$-times with $k \in \mathbb{N}$, using the previous formalism if $\alpha = $
\begin{equation*}
\alpha^{(k)}_p =
\left\{
	\begin{array}{ll}
		\alpha^{(0)}_p = \alpha_p  & \mbox{if } k = 0 \\
		\alpha^{(k)}_p = D_p^k(\alpha) & \mbox{if } k > 0
	\end{array}
\right.
\end{equation*}
\end{definition}
We are interested in showing wich results holds in the context of the ultrametric inequality respect the context of the absolute value. In the classical settings the following simple hold.
\begin{itemize}
    \item{$D_p(0)=0$, where $0=\{0,0,\dots,0\}$}
    \item{$\forall a \in \mathbb{Q}_p$, $D_p(a\alpha)=a D_p(\alpha)$}
\end{itemize}

 the followings are true for the $p$-adic Ducci operator $D_p$ as well as for the Ducci operator $D$:
\begin{itemize}
    \item{$D_p(0)=0$, where $0=\{0,0,\dots,0\}$}
    \item{$\forall a \in \mathbb{Q_p}$, $D_p(a\alpha)=a D_p(\alpha)$}
\end{itemize}
The Ducci sequences are periodic being that for every sequence $\alpha^(k) = D^k(\alpha)$ there exists two natural indexes $r,s$ such that $\alpha^{(r)} = \alpha^{(r + s)}$. The number $r-s=c$ is called the lenght of the cycle. It is easy to see that in the classical context every constant sequence converge to the zeros sequence with cycle of lenght $c=1$. One difference respect to the classical setting is that if the starting seed of the $p$-adic Ducci sequence $\alpha^{(0)}$ is in the $p$-adic integer ring $\mathbb{Z}_p$ then being that $\alpha^{(k)} \in \{0,1\}^{n}, k > 1$ and that $|a_i-a_{i+1}|_p = max(|0|_p,| \pm{1}|_p) = max(|0|_p,|0|_p) = 0$ we have

\begin{equation*}
    \lim_{k \rightarrow +\infty} D_p^{(k)} = 0
\end{equation*}

so an easy lemma is the following one

\begin{lemma}
If $\alpha_p^{(0)} \in \mathbb{Z}_p$ the ducci sequences generated by this seed is the null sequence with period $1$.
\end{lemma}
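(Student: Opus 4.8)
The plan is to split the claim into two independent pieces. First, $0=(0,\dots,0)$ is a fixed point of $D_p$ (this is exactly the first bullet recorded above), so it already forms a cycle of length $1$; it therefore suffices to prove that every orbit started from a seed $\alpha^{(0)}\in\mathbb{Z}_p^n$ eventually reaches $0$, i.e. $D_p^k(\alpha^{(0)})=0$ for all sufficiently large $k$. Granting that, if $K$ is the least index with $D_p^K(\alpha^{(0)})=0$, then $D_p^K(\alpha^{(0)})=D_p^{K+1}(\alpha^{(0)})$, so $r=K$ and $s=1$ exhibit the sequence as ultimately the null sequence of period $1$.

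For the second piece I would first reduce to a finite dynamical system. If $\alpha^{(0)}\in\mathbb{Z}_p^n$ is not constant, each difference $a_i-a_{i+1}$ lies in $\mathbb{Z}_p$, hence $\mathrm{ord}_p(a_i-a_{i+1})\ge 0$ whenever it is nonzero; let $S$ be the largest such order. A direct computation of $|p^{\pm a}-p^{\pm b}|_p$ for $0\le a,b\le S$ (using that $p^c-1$ is a unit of $\mathbb{Z}_p$ for $c\ge 1$, together with the ultrametric inequality) shows that from $\alpha^{(1)}_p$ onward every coordinate of $\alpha^{(k)}_p$ lies in the finite set $\{0\}\cup\{p^m:-S\le m\le S\}$; by pigeonhole the orbit is then eventually periodic, and it remains only to identify the eventual cycle as $\{0\}$. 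Note that on $\{0,1\}^n\subseteq\mathbb{Z}_p^n$ one has $|0-1|_p=|1|_p=1$, so there $D_p$ coincides with the $\mathbb{F}_2$-Ducci map $(x_i)\mapsto(x_i+x_{i+1})$ (indices mod $n$).

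The crux, and the step I expect to cause trouble, is this last one: identifying the eventual cycle. Unlike the Archimedean case there is no obvious quantity that decreases monotonically along the whole orbit, and the reduction just noted shows that $\{0,1\}$-valued configurations persist in nontrivial cycles unless $n$ is a power of $2$; in fact the collapse already fails for $n=3$, where the seed $(1,0,0)\in\mathbb{Z}_p^3$ produces, for every prime $p$, the $3$-cycle $(1,0,1)\to(1,1,0)\to(0,1,1)\to(1,0,1)$. So the argument as a whole needs a restriction on $n$ in the spirit of the classical power-of-$2$ theorem recalled earlier, and before attempting the general proof I would first pin down the correct hypothesis on the small cases $n=2,3,4$ and a few primes $p$.
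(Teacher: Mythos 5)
You are right not to trust the final step: it cannot be completed, because the lemma as printed is false, and your counterexample is valid. For $n=3$ and any prime $p$, the seed $(1,0,0)\in\mathbb{Z}_p^3$ gives $D_p(1,0,0)=(1,0,1)$ and then the never-vanishing cycle $(1,0,1)\to(1,1,0)\to(0,1,1)\to(1,0,1)$. The paper's own justification (the paragraph preceding the lemma, which serves as its proof) breaks at exactly the two places your plan isolates: it asserts $\alpha^{(k)}\in\{0,1\}^n$ for $k>1$, which is unjustified and false in general, since for $a\neq b\geq 0$ one has $|p^{-a}-p^{-b}|_p=p^{\max(a,b)}>1$ (e.g.\ the seed $(4,2,0,0)\in\mathbb{Z}_2^4$ gives $\alpha^{(1)}=(1/2,1/2,0,1/4)$ and $\alpha^{(2)}=(0,2,4,4)$); and it computes $\max(|0|_p,|\pm 1|_p)=0$, whereas $|\pm 1|_p=1$. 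With the correct value, $D_p$ restricted to $\{0,1\}^n$ is exactly the classical binary Ducci map, which does not collapse for general $n$ --- so even the paper's intended argument could at best work for $n$ a power of $2$.

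Two remarks on your proposed repair. Your boundedness step is correct (if all nonzero coordinates are $p^m$ with $-S\leq m\leq S$, then any nonzero new coordinate is $p^{-\min(a,b)}$ with the exponent again in $[-S,S]$), and it is in fact more careful than the paper's own periodicity lemma. But restricting to $n$ a power of $2$ does not by itself rescue the statement for arbitrary seeds in $\mathbb{Z}_p^n$: the seed $(4,2,0,0)\in\mathbb{Z}_2^4$ above enters the $2$-cycle $(1/2,1/2,0,1/4)\leftrightarrow(0,2,4,4)$ and never reaches $0$. The obstruction is the one you half-noticed: once distinct powers of $p$ appear, the zero/nonzero pattern no longer evolves by the $\mathbb{F}_2$-Ducci rule (two distinct nonzero entries still produce a nonzero entry), so the mod-$2$ reduction invoked later in the paper also fails. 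A correct statement in the spirit of the lemma would be: if $n$ is a power of $2$ and all adjacent differences of the seed are zero or $p$-adic units (in particular if the seed lies in $\{0,1\}^n$), then $\alpha^{(1)}\in\{0,1\}^n$, the dynamics from then on is the classical $\mathbb{F}_2$ Ducci map by your observation, and the classical power-of-$2$ theorem gives eventual nullity with period $1$.
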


\begin{lemma}
Let 
\begin{align*}
 \{\alpha^{(k)}_p\}_0^{\infty} & = \\
 & = \{\alpha^{(0)}_p,\alpha^{(1)}_p,\dots, \alpha^{(k)}_p,\dots \}   \\
 & = \{\alpha, D_p(\alpha), \\ D^2_p(\alpha) & = D_p(D_p(\alpha),\dots, \\  
 & D_p^k(\alpha),\dots \} 
\end{align*}

a $p$-adic Ducci sequence. The sequence is ultimely periodic. 

\end{lemma}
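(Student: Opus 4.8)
The plan is to show that, from the first iterate onward, the entire orbit is trapped inside a \emph{finite} subset of $\mathbb{Q}_p^n$; once that is established, ultimate periodicity is immediate from the pigeonhole principle together with the fact that $D_p$ is a genuine (deterministic) map. Since by construction $D_p(\mathbb{Q}_p^n)\subseteq P^n$, we have $\alpha^{(k)}_p\in P^n$ for every $k\ge 1$, so it suffices to confine the orbit to a finite subset of $P^n$.

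The key step is to record exactly what $D_p$ does to $p$-adic valuations. For $x,y\in P$, the ultrametric inequality (with its usual equality case when the two valuations differ) gives
\begin{equation*}
|x-y|_p=
\begin{cases}
0, & x=y,\\[2pt]
\max(|x|_p,|y|_p), & x\neq y,
\end{cases}
\end{equation*}
and, writing a nonzero element of $P$ as $p^{c}$ with $\mathrm{ord}_p(p^{c})=c$ (and assigning the value $+\infty$ to a zero entry), a nonzero output $|x-y|_p$ equals $p^{-\min(c_x,c_y)}$. The point to emphasize is that the $p$-adic norm turns the exponent $c$ into $-c$: a single Ducci step therefore \emph{negates} the set of valuations that can occur.

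Carrying this out, let $\mathcal{E}_k\subseteq\mathbb{Z}$ be the set of $p$-adic valuations of the nonzero coordinates of $\alpha^{(k)}_p$. By the computation above, every valuation occurring in $\alpha^{(k+1)}_p$ is of the form $-\min$ of two valuations occurring in $\alpha^{(k)}_p$ (with the $+\infty$ convention for zero coordinates), hence $\mathcal{E}_{k+1}\subseteq-\mathcal{E}_k$ for all $k\ge 1$. Consequently the finite, symmetric set $\mathcal{E}:=\mathcal{E}_1\cup(-\mathcal{E}_1)$ (with at most $2n$ elements) satisfies $-\mathcal{E}=\mathcal{E}$, and an easy induction gives $\mathcal{E}_k\subseteq\mathcal{E}$ for every $k\ge 1$. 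Therefore
\begin{equation*}
\alpha^{(k)}_p\in S:=\bigl(\{0\}\cup\{p^{c}:c\in\mathcal{E}\}\bigr)^{\,n}\qquad(k\ge 1),
\end{equation*}
and $S$ is finite with $|S|\le(2n+1)^n$. Now among $\alpha^{(1)}_p,\dots,\alpha^{(|S|+1)}_p$ two must coincide, say $\alpha^{(r)}_p=\alpha^{(s)}_p$ with $1\le r<s$; applying $D_p$ repeatedly yields $\alpha^{(r+j)}_p=\alpha^{(s+j)}_p$ for every $j\ge 0$, so the sequence is ultimately periodic with period dividing $s-r$.

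The part that needs care — and the only place where the non-Archimedean setting behaves differently from $\mathbb{R}$ — is the finiteness claim. The ultrametric inequality does show that the coordinate norms are non-increasing, but the ball $\{x\in P:|x|_p\le M\}$ is still \emph{infinite}, so boundedness of the norm alone is not enough. What rescues the argument is the sign flip $c\mapsto -c$ and the resulting two-step stabilization $\mathcal{E}_{k+2}\subseteq\mathcal{E}_k$, which pins the admissible valuations down to a fixed finite set. Verifying the valuation bookkeeping (the $+\infty$ convention for zero entries, the case $x=y$, and the equality case of the strong triangle inequality) is the main routine obstacle, but no deeper input is required.
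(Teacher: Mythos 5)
Your proof is correct and follows essentially the same route as the paper: after the first iterate the orbit lies in $P^n$, the ultrametric equality on $P$ confines all subsequent coordinates to a fixed finite set of values, and the pigeonhole principle together with determinism of $D_p$ yields ultimate periodicity. Your explicit valuation bookkeeping ($\mathcal{E}_{k+1}\subseteq-\mathcal{E}_k$, hence $\mathcal{E}_k\subseteq\mathcal{E}_1\cup(-\mathcal{E}_1)$) is a more careful justification of the finiteness step that the paper only asserts via the bound $p^{-\nu}\le a_i^{(k)}\le p^{\nu}$, and your clean conclusion $\alpha^{(r+j)}_p=\alpha^{(s+j)}_p$ tidies up the paper's somewhat garbled final display.
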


\begin{proof}

We first notice that apart from the first term $\alpha^{(0)}$ every term of the whole sequence $\{\alpha^{(k)}\}_0^{\infty}$ is in $P^n$. So let us consider a generic term $a_i^(k)$ of the $p$-adic ducci sequence, we can notice that by the ultrametric inequality in $x,y \in P$ then 
\begin{equation*}
|x-y|_p =
\left\{
	\begin{array}{ll}
		max(|x|_p,|y|_p)  & \mbox{if } x \neq y \\
		0  & \mbox{if } x = y
	\end{array}
\right.
\end{equation*}

so very term $a_i^{(k)}$ of the n-uple $\alpha^{(k)}$ is bounded between $p^{-\nu} \leq a_i^{(k)} \leq  p^{\nu}$ where $\nu = max(|a_i^{(0)}|_p)$. This means that there are only a finite number of possible values for $\alpha^{(k)}$ from $k>0$ in $P^n$. Then by the Pigeonhole principle \cite{ajtai1994complexity} there must be some $r,s \in \mathbb{N}, r,s > 0$ so that $\alpha^{(r)} = \alpha^{(r + s)}$. But then 

\begin{equation*}
    \alpha^{(r+s+ih)} = \alpha^{(r+i)}
\end{equation*}
for every $0\leq i < s$ and $h \in \mathbb{N}, h>0$.

\end{proof}

The first interesting result with the classical Ducci operator is the fact that if $\alpha = \alpha(0) = \{a_1,a_2,\dots,a_n\}$ contains a number of terms that is a power of $2$ then there exists a index $K \in \mathbb{N}$ such that $\alpha^{(k)} = 0$.

This follow from the observation that for every term $a^{(k)}$ then $|a_i -a_{i+1}|_p \equiv a_i+a_{i+1} \mod  2$. 

If $a_i \in P^n$ then up to rearrange the indexes 

\begin{align*}
|a_i-a_{i+1}|_p  & =|p^{\alpha}-p^{\beta}|_p  \\
& \leq max(|p^{\alpha}|_p,|p^{\beta}|_p) \\
& = min(\alpha,\beta) \\
\end{align*}

this means that in general $|a_i-a_{i+1}|_ \equiv |a_i+a_{i+1}| \mod{2}$ and so the study of the p-adic Ducci sequences reduce to the study of the period in $\mathbb{F}^n_2$. In particular the proof of section 3 in \cite{ehrlich1990periods} works without modification. 
So for example this means that in $n$ is a power of $2$ then for every p-adic Ducci sequence $\{\alpha^{(k)}_p\}$ there exist a $K \geq 0$ such that  $\{\alpha^{(k)}_p\}=0$ for every $k>K$.

%%%%%%%%%%% Example 1%%%%%%%%%%%%%%%%%%%%%%%%%%

\bibliography{p-adic-ducci-operator.bib}

%%%%%%%%%%%%% end %%%%%%%%%%%%%%%%%%%%%%%%%%%%%%%

\end{document}